\documentclass[10pt,reqno]{amsart}

\usepackage[T1]{fontenc}
\usepackage[margin=1in]{geometry}
\usepackage{amsmath,amssymb,amsthm,mathtools,microtype,booktabs,tabularx,cite}
\usepackage[
	colorlinks=true,
	linkcolor=blue,
	citecolor=blue,
	urlcolor=blue,
	pdfauthor={Marcus Appleby, Steven T. Flammia, Gene S. Kopp},
	pdftitle={The twisted convolution identity and ghost r-SICs from finite quantum dilogarithms},
]{hyperref}
\usepackage{autonum} 

\newtheorem{theorem}{Theorem}[section]
\newtheorem{proposition}[theorem]{Proposition}
\newtheorem{lemma}[theorem]{Lemma}

\theoremstyle{definition}
\newtheorem{definition}[theorem]{Definition}
\theoremstyle{remark}

\newcommand{\Z}{\mathbb Z}
\newcommand{\C}{\mathbb C}

\newcommand{\R}{\mathbb R}
\newcommand{\SL}{\operatorname{SL}}
\newcommand{\tr}{\operatorname{tr}}
\newcommand{\Tr}{\operatorname{tr}}

\newcommand{\e}{\mathrm e}
\newcommand{\ee}[1]{\e\!\left(#1\right)}

\newcommand{\rr}{\mathbf r}

\newcommand{\asf}{a}
\newcommand{\bsf}{b}
\newcommand{\csf}{c}
\newcommand{\dsf}{d}
\renewcommand{\dim}{d}

\newcommand{\smmattwoone}[2]
{\begin{psmallmatrix}
    #1 \\
    #2
\end{psmallmatrix}}
\newcommand{\smcoltwo}{\smmattwoone}

\newcommand{\smmattwo}[4]
{\begin{psmallmatrix}
    #1 & #2 \\
    #3 & #4
\end{psmallmatrix}}
\DeclareFontFamily{U}{rcjhbltx}{}
\DeclareFontShape{U}{rcjhbltx}{m}{n}{<->rcjhbltx}{}
\DeclareSymbolFont{hebrewletters}{U}{rcjhbltx}{m}{n}
\DeclareMathSymbol{\shin}{\mathord}{hebrewletters}{152}
\newcommand{\sfc}[3]{\shin^{#1}_{#2}\!\left(#3\right)}

\newcommand{\nin}{\notin}
\newcommand{\abs}[1]{\left| #1 \right|}
\newcommand{\Mod}[1]{\,\left(\operatorname{mod}\, #1\right)}
\newcommand{\ul}{\underline}
\newcommand{\mbf}{\mathbf}
\newcommand{\col}{{\rm col}}
\newcommand{\row}{{\rm row}}
\newcommand{\ct}{\dagger}
\newcommand{\sbe}[1]{|#1\rangle}

\title{The twisted convolution identity and ghost $r$-SICs from finite quantum dilogarithms}
\author{Marcus Appleby}
\address{University of Sydney}
\email{marcus.appleby@gmail.com}
\author{Steven T. Flammia}
\address{Virginia Tech and Phasecraft}
\email{stf@vt.edu}
\author{Gene S. Kopp}
\address{Louisiana State University}
\email{kopp@math.lsu.edu}
\date{September 30, 2026}

\begin{document}

\begin{abstract}
Radchenko and Wheeler (RW) recently proved a finite pentagon relation for real quadratic special values of the modular quantum dilogarithm and used it to establish the rank-$1$ twisted convolution identity conjectured by the current authors. RW gave an explicit argument in the principal case and remarked that their proof holds for all rank-$1$ admissible tuples. We extend their proof to all rank-$r$ admissible tuples and provide an explicit dictionary between the modular quantum dilogarithm and the Shintani--Faddeev modular cocycle conventions in the respective papers. Thus, we establish that, if $d,r$ are positive integers such that $r<\frac{d-1}{2}$ and $\frac{d^2-1}{r(d-r)} \in \mathbb{Z}$, then there exist ghost $r$-SICs: i.e., configurations of $d^2$ rank-$r$ subspaces in $\mathbb{C}^d$ that satisfy a non-Hermitian equichordal condition. Under the Stark conjecture, these configurations are Galois conjugate to Hermitian equichordal configurations called $r$-SICs (or rank-$r$ SIC-POVMs). 
\end{abstract}

\maketitle

\section{Introduction}\label{sec:introduction}

Zauner's conjecture~\cite{Zauner} postulates the existence of \(\dim^2\) complex equiangular lines in \(\C^\dim\) (also called SICs or SIC-POVMs in quantum information). 
The present authors (AFK) have constructed such lines~\cite{AFK} for every \(\dim > 3\), as well as analogous $r$-dimensional complex subspaces, using special values of the Shintani--Faddeev modular cocycle~\cite{Kopp}. 
Their passage from those special values to SICs is conditional on suitable Stark conjectures, which supply the
algebraicity and Galois switching, and on the \textit{twisted convolution identity} (TCI), a quadratic identity satisfied by the cocycle which ensures the constructed operator is idempotent. 
The construction requires an \textit{admissible tuple} as input, which is an ordered triple \(t = (\dim,r,Q)\) where the dimension \(\dim\), rank \(r\), and integer binary quadratic form \(Q\) must satisfy specific conditions (see Definition~\ref{def:admissible-tuple} below). 

Recently, Radchenko and Wheeler (RW) have proven the algebraicity of those same special values by finding explicit polynomial identities of the closely related modular quantum dilogarithm~\cite{RW}. 
They also proved the TCI with an argument written for the case \(r=1\) and where \(Q\) is a principal form~\cite[Thm.~7]{RW}; they remark that the proof holds for general forms and $r=1$, but is complicated to articulate owing to the intricate alignment needed between their conventions and those of \cite{AFK}. 

The goal of this paper is to provide an explicit extension of the RW proof of the TCI for every admissible tuple, including nonprincipal forms and all admissible ranks $r$, and to provide the dictionary required to map between the conventions of the two sets of authors. 

The proof we present is based on two main ideas.
\begin{itemize}
    \item[(1)] The RW pentagon relation implies a similar pentagon relation about sums of quotients of quantum dilogarithm values on dual pairs $H, H^\vee$ of subgroups of the RW metric group $G$. The proof of that implication, given in Section \ref{sec:subgroups}, is a straightforward application of the character theory of finite abelian groups. 
    \item[(2)] The AFK construction of $r$-SICs involves an odd-order symmetry that we denote by a matrix $L \in \SL_2(\Z)$ and whose order modulo $d$ we denote by $2m+1$. 
    The relations between $L$, the matrix $\gamma = L^{2m+1}$, and the admissible tuple are described in Section \ref{sec:sicsetup} and play a key role in the proof. The modular quantum dilogarithm (or Shintani--Faddeev cocycle) is preserved by an action of $L$, and that fact is crucial to obtaining the TCI in Section \ref{sec:tcc}.
\end{itemize}
The proof also requires a significant amount of translation between the conventions of AFK \cite{AFK, Kopp} and the conventions of RW \cite{RW}. A general translation between the expressions for the real quadratic special values used by the two sets of authors is carried out in Section \ref{sec:general}.

The results of this paper are unconditional and do not use any form of the Stark conjectures. 
The Stark conjectures enter only in passing from the ghost projections to a genuine SIC. 

\subsection{Main results}

We now state the main results of this paper. First, we provide a key definition.

\begin{definition}
\label{def:admissible-tuple}
    An \emph{admissible tuple} is a triple $t = (d,r,Q)$, where:
    \begin{enumerate}
        \item $d,r$ are positive integers such that $r < \frac{d-1}{2}$ and $\frac{d^2-1}{r(d-r)} \in \Z$;
        \item $Q$ is an integral binary quadratic form such that $\frac{(d+1)(d-3)}{\operatorname{disc}(Q)}$ is a square integer.
    \end{enumerate}
\end{definition}

\begin{theorem}\label{thm:tci}
    Let $t = (d,r,Q)$ be an admissible tuple, with $Q(x,y) = ax^2+bxy+cy^2$, let $L$ and $\gamma$ be defined in terms of $t$ as in Section \ref{sec:sicsetup}, and let $\tau$ be an attracting fixed point of $L$ under the fractional linear transformation action. 
    Let $\zeta_d = \e^{\frac{2\pi i}{d}}$, and let $\langle \mbf{x}, \mbf{y}\rangle = x_2y_1-x_1y_2$ for column vectors $\mbf{x},\mbf{y}\in\Z^2$.
    Let $\mbf{p}$ be a column vector in $\Z^2 \setminus \dim\Z^2$, and let $\mathcal{I}$ be a set of coset representatives of $\Z^2/\dim\Z^2$ containing $\mbf{0}$ and $\mbf{p}$.  
    Then there exists an integer $\lambda$ such that $2\lambda+\Tr(L)$ is coprime to $d$ and the Shintani--Faddeev cocycle $\shin$ satisfies the quadratic relation 
\begin{equation}\label{eq:tcc}
    \sum_{\mbf{q} \in \mathcal{I}} \zeta_{\dim}^{r\langle \mbf{p}, (\lambda I + L)\mbf{q} \rangle} \sfc{\dim^{-1}\mathbf{q}}{\gamma}{\tau} \sfc{\dim^{-1}(\mathbf{q}-\mathbf{p})}{\gamma^{-1}}{\tau} = 0.
\end{equation}
\end{theorem}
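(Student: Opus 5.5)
The plan follows the two ideas announced in the introduction: transfer \eqref{eq:tcc} to the Radchenko--Wheeler setting, and then derive it from their finite pentagon relation.

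\textbf{Step 1: translate to the RW conventions.} I would use the dictionary of Section~\ref{sec:general} to rewrite each value $\sfc{\dim^{-1}\mathbf{q}}{\gamma}{\tau}$ as a modular quantum dilogarithm value of RW. The value should be indexed by an element of the RW metric group $G$, multiplied by an explicit root of unity and an overall constant independent of $\mathbf{q}$. Here $G$ is essentially $(\Z/\dim\Z)^2$ with the quadratic form induced by $Q$ and $\gamma$. Similarly, I would write $\sfc{\dim^{-1}(\mathbf{q}-\mathbf{p})}{\gamma^{-1}}{\tau}$ as the reciprocal of a dilogarithm value. This uses the cocycle relation $\shin_{\gamma^{-1}} \shin_{\gamma} = 1$ after the appropriate shift. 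The sum \eqref{eq:tcc} then becomes a character-twisted sum over $G$ of quotients $\Phi(x)/\Phi(x-p)$, where $\Phi$ denotes the RW dilogarithm values.

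The phases must be tracked carefully. The twist $\zeta_\dim^{r\langle\mathbf{p},\lambda\mathbf{q}\rangle}$ together with the quadratic phases from the translation should combine into a character of $G$, possibly after completing a square. Completing the square is where $2\lambda+\Tr(L)$ coprime to $\dim$ enters: this condition ensures invertibility of $\lambda I+L$ modulo $\dim$, or at least of the relevant quadratic form.

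\textbf{Step 2: use the symmetry $L$.} The relations of Section~\ref{sec:sicsetup}, namely $\gamma=L^{2m+1}$ and $L$ fixing $\tau$, give invariance of the cocycle under the $L$-action, $\mathbf{q}\mapsto L\mathbf{q}$. I would use this invariance to absorb the term $\langle\mathbf{p},L\mathbf{q}\rangle$. Replacing $\mathbf{q}$ by $L^{j}\mathbf{q}$ and averaging over $j$ should reduce the phase to one that is trivial on a subgroup $H\subset G$ and a character on its dual $H^\vee$. The rank $r$ enters through the order $2m+1$ and the factor $r$ in the exponent. The order of $\zeta_\dim^{r}$ then determines which subgroup $H$ (of order related to $r(\dim-r)$ and $\dim^2-1$) appears. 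I would choose $\lambda$ explicitly, for instance $\lambda$ solving $2\lambda+\Tr L\equiv$ an appropriate unit, so that the combined operator $\lambda I+L$ maps onto the required isotropic structure.

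\textbf{Step 3: apply the subgroup pentagon relation.} The subgroup version of the RW pentagon relation from Section~\ref{sec:subgroups} expresses $\sum_{x\in H}\Phi(x)/\Phi(x-p)$, twisted by characters of $H^\vee$, as a multiple of $\delta_{p\in H^\perp}$ or a similar delta term. Since $\mathbf{p}\notin\dim\Z^2$, $p\neq0$ in $G$, so the right side vanishes, giving \eqref{eq:tcc}. Independence of the choice of representatives $\mathcal I$ follows because each summand is $\dim\Z^2$-periodic in $\mathbf{q}$; this comes from the quasi-periodicity of $\shin$ combined with the twist.

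\textbf{Main obstacle.} I expect the main difficulty to be Step 1 together with the phase bookkeeping in Step 2. Every root of unity arising from the convention change, from nonprincipal $Q$, and from $r>1$ must cancel exactly to leave a genuine character. My first approach would be to verify the principal form with $r=1$ against RW's Theorem~7. I would then track how changing the form by $\SL_2(\Z)$-equivalence, and changing $r$, multiply everything by controlled characters.
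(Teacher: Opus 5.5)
There is a genuine gap: the vanishing mechanism you propose in Step 3 does not exist. The subgroup pentagon relation \eqref{eq:subgrouppentagon2} has no delta term. It equates the $H$-sum of $F_\gamma^+(\ul{x}-\ul{y})/F_\gamma^-(\ul{x}-\ul{y}+\ul{v})$ with $F_\gamma^-(\ul{v})^{-1}$ times a $\langle\ul{y};\cdot\rangle_\gamma$-twisted $H^\vee$-sum of $F_\gamma^+(\ul{u}+\ul{v})/F_\gamma^-(\ul{u})$, so ``$p\neq 0$'' forces nothing to vanish.

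Your identification of the groups is also off. $\abs{G}=N=(d_j-3)d^2$ (Lemma \ref{lem:N}), not $d^2$. The TCI sum is the twisted sum over the proper subgroup $H^\vee=\Z_{\row}^2(L-I)/\Z_{\row}^2(\gamma-I)\cong(\Z/d\Z)^2$. Its annihilator $H=d\Z_{\row}^2/\Z_{\row}^2(\gamma-I)$, of order $d_j-3$, is exactly the subgroup of $G$ fixed by $L$.

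The missing idea is to take $\ul{v}=\ul{y}-\ul{y}L$ with $\ul{y}\notin H$. Then $F_\gamma^\pm(\ul{z}L)=F_\gamma^\pm(\ul{z})$ and $\ul{x}L=\ul{x}$ on $H$ make every term of the $H$-sum equal to $1$, so that side is simply $\sqrt{d_j-3}$. The resulting identity becomes \eqref{eq:tcc} only after you isolate the $\ul{u}=\ul{0}$ term, where $F_\gamma^-(\ul{0})=\varepsilon^{-(2m+1)j/2}\neq F_\gamma^+(\ul{0})$, and use $\varepsilon^{(2m+1)j/2}-\varepsilon^{-(2m+1)j/2}=\sqrt{N}=d\sqrt{d_j-3}$. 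The vanishing is this exact cancellation, not a vanishing delta. It is also why $\mathcal{I}$ must contain $\mbf{0}$ and $\mbf{p}$: the summands are not $d\Z^2$-periodic at those two classes, because $\shin$ at integral arguments depends on the sign of the second coordinate (Proposition \ref{prop:Ftoshin}).

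Your Step 2 uses $L$ in the wrong place. The cocycle values are invariant under $\mathbf{r}\mapsto L\mathbf{r}$ (the translation of $F_\gamma^\pm(\ul{z}L)=F_\gamma^\pm(\ul{z})$), and $\langle\cdot,\cdot\rangle$ is $\SL_2$-invariant. So substituting $\mbf{q}\mapsto L\mbf{q}$ in \eqref{eq:tcc} turns the identity for $\mbf{p}$ into the identity for $L^{-1}\mbf{p}$; the second factor becomes $\sfc{d^{-1}(\mbf{q}-L^{-1}\mbf{p})}{\gamma^{-1}}{\tau}$. Averaging over powers of $L$ therefore only permutes the family of identities and cannot prove any of them.

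No completing of squares is needed either. With $\mbf{u}=(L-I)^\top SL^{m+1}\mbf{q}$ and $\mbf{y}=SL^{m+1}\mbf{p}$, the RW bicharacter is literally $\zeta_d^{\langle\mbf{p},L^m\mbf{q}\rangle}$. Then $\lambda$ is read off from $L^m=rL-r_{j,m-1}I$ (Lemma \ref{lem:Ltom}) via $\lambda r\equiv -r_{j,m-1}\Mod{d}$. The coprimality of $2\lambda+\Tr(L)$ with $d$ is a property of this $\lambda$, not an ingredient of the argument.
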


\begin{theorem}\label{thm:ghost}
    Let $\dim,r$ be positive integers such that $r < \frac{\dim-1}{2}$ and $\frac{\dim^2-1}{r(\dim-r)} \in \Z$. Then there exist $\dim$-by-$\dim$ complex matrices $\tilde\Pi_{\mbf{p}}$ indexed by $\mbf{p} \in \Z^2/\dim\Z^2$ satisfying the relations:
    \begin{itemize}
        \item[(1)] (Weyl--Heisenberg covariant) $\tilde\Pi_{\mbf{p}} = D_{\mbf{p}} \tilde\Pi_{\mbf{0}} D_{\mbf{p}}^{-1}$;
        \item[(2)] (Rank $r$ projections) $\tilde\Pi_{\mbf{p}}^2 = r\tilde\Pi_{\mbf{p}}$;
        \item[(3)] (Equichordal) $\Tr(\tilde\Pi_{\mbf{p}}\tilde\Pi_{\mbf{q}}) = \frac{r(\dim r-1)}{\dim^2-1}$ for $\mbf{p} \neq \mbf{q}$;
        \item[(4)] (Parity-Hermitian) $\tilde\Pi_{\mbf{p}}^\ct = U_P \tilde\Pi_{\mbf{p}} U_P^\ct$.
    \end{itemize}
    Here $D_{\mbf{p}}$ is the Weyl--Heisenberg displacement operator $D_{\mbf{p}} = \e^{\frac{(\dim+1)\pi i}{\dim}p_1p_2}X^{p_1}Z^{p_2}$ (a $\dim$-by-$\dim$ unitary matrix), with $X,Z$ are the standard shift and modulation operators acting on the standard basis by $X\sbe{j}=\sbe{j+1}$ and $Z\sbe{j}=\zeta^j\sbe{j}$, and $U_P$ is the partity operator acting on the standard basis by $U_P\,\sbe{j} = \sbe{-j}$, and the subscripts of the standard basis elements $\sbe{j}$ are interpreted modulo $\dim$.
\end{theorem}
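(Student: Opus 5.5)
Theorem~\ref{thm:ghost} will follow from Theorem~\ref{thm:tci} combined with the AFK construction of \cite{AFK}. Given $d,r$ as in the hypothesis, we first produce an admissible tuple. Take for $Q$ any integral binary quadratic form of discriminant $(d+1)(d-3)$, for instance the principal form $x^2+(d-1)xy-(d-1)y^2$, whose discriminant is $(d-1)^2+4(d-1)=(d+1)(d-3)$. Then $(d+1)(d-3)/\operatorname{disc}(Q)=1$ is a square, so $t=(d,r,Q)$ is admissible in the sense of Definition~\ref{def:admissible-tuple}. From $t$ we form $L$, $\gamma=L^{2m+1}$ and the attracting fixed point $\tau$ as in Section~\ref{sec:sicsetup}.

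Next we write down the ghost fiducial exactly as in \cite{AFK}. Define
\[
\tilde\Pi_{\mbf 0}=\sum_{\mbf p\in\Z^2/d\Z^2} c_{\mbf p}\, D_{\mbf p},
\]
where the overlap coefficients $c_{\mbf 0}=r/d$ and, for $\mbf p\neq \mbf 0$, $c_{\mbf p}$ are a fixed normalisation times a root-of-unity phase times the special value $\sfc{d^{-1}\mbf p}{\gamma}{\tau}$. Set $\tilde\Pi_{\mbf p}=D_{\mbf p}\tilde\Pi_{\mbf 0}D_{\mbf p}^{-1}$; property (1) then holds by definition.

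We verify the remaining properties in the Weyl--Heisenberg expansion.
\begin{itemize}
\item[(3)] Using $\Tr(D_{\mbf p}^\ct D_{\mbf q})=d\,\delta_{\mbf p\mbf q}$, property (3) reduces to $c_{\mbf p}c_{-\mbf p}$ being constant for $\mbf p\neq \mbf 0$. This is the standard inversion/product formula for the Shintani--Faddeev cocycle under $\mbf p\mapsto-\mbf p$, which relates $\shin_\gamma$ to $\shin_{\gamma^{-1}}$ (\cite{Kopp}). The constant is forced to give $\frac{r(dr-1)}{d^2-1}$ by the choice of normalisation.
\item[(4)] The relation $D_{\mbf p}^\ct=D_{-\mbf p}=U_PD_{\mbf p}U_P^\ct$ reduces property (4) to $\overline{c_{\mbf p}}=c_{\mbf p}\cdot(\text{conj.\ phase})$ relative to $c_{-\mbf p}$. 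I expect this follows from the real-conjugation symmetry of $\shin$ at real quadratic $\tau$, again as in AFK.
\item[(2)] Expanding $\tilde\Pi_{\mbf 0}^2$ via $D_{\mbf q}D_{\mbf p-\mbf q}=\zeta^{(\cdot)}D_{\mbf p}$, the coefficient of $D_{\mbf p}$ is $\sum_{\mbf q}\zeta^{\langle\cdots\rangle}c_{\mbf q}c_{\mbf p-\mbf q}$. Rewriting $c_{\mbf p-\mbf q}$ through $\shin_{\gamma^{-1}}$ by the inversion formula, the requirement that this equal $r\,c_{\mbf p}$ becomes a convolution identity. Separating the terms $\mbf q=\mbf 0$ and $\mbf q=\mbf p$, which contribute $2(r/d)c_{\mbf p}$, what remains is precisely \eqref{eq:tcc}, up to the normalising factor $r - 2r/d$ coming from the identity term. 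The $\mbf p=\mbf 0$ coefficient is handled separately by (3), since it is a trace computation.
\end{itemize}

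The main obstacle is step (2): matching the twisting phase $\zeta_d^{r\langle\mbf p,(\lambda I+L)\mbf q\rangle}$ in \eqref{eq:tcc} with the phases produced by the displacement multiplication law and the phase conventions in $c_{\mbf p}$. In particular, the choice of $\lambda$ with $2\lambda+\Tr(L)$ coprime to $d$ must be absorbed by reparametrising coefficients, using $L$-invariance of $\shin$, so that $\tilde\Pi_{\mbf 0}$ becomes $L$-covariant. I would handle this by first establishing the algebraic reduction ``idempotence $\iff$ TCI'' as in AFK, whose proof is unconditional, and then invoking Theorem~\ref{thm:tci}.
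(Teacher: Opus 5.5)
Your route is the paper's. Its proof of Theorem~\ref{thm:ghost} is just Theorem~\ref{thm:tci} combined with \cite[Theorem 1.45]{AFK}, and your final paragraph invokes exactly that unconditional reduction, so the Weyl--Heisenberg verification you sketch only glosses the cited theorem. The parts you write out explicitly, however, contain errors.

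\textbf{The sample form.} Your form has discriminant $(d-1)^2+4(d-1)=(d-1)(d+3)$, not $(d+1)(d-3)$. More importantly for $r>1$, Section~\ref{sec:sicsetup} needs $L\in\SL_2(\Z)$. Since $\det L=\frac14\bigl((d_j-1)^2-(f_j/f)^2\operatorname{disc}(Q)\bigr)$, this requires $(f_j/f)^2\operatorname{disc}(Q)=(n+1)(n-3)$ with $n=d_j=\frac{d^2-1}{r(d-r)}-1$. This $n$ equals $d$ only when $r=1$. For example, $d=11$, $r=3$ gives $n=4$ and $K=\mathbb{Q}(\sqrt5)$, whereas $(d+1)(d-3)=96$. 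So the condition of Definition~\ref{def:admissible-tuple}, read literally with $d$, matches Section~\ref{sec:sicsetup} only for $r=1$. A choice that works is $Q=x^2+(n-3)xy-(n-3)y^2$. Then $L=\smmattwo{1}{n-3}{1}{n-2}$ has positive entries (as $n\ge 4$), so $\gamma_{21}>0$ automatically.

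\textbf{The by-hand normalisations.} These break down for $r>1$ because you take items (2) and (4) at face value.
\begin{itemize}
\item With $c_{\mathbf 0}=r/d$, i.e.\ $\Tr\tilde\Pi_{\mathbf 0}=r$, the relation $\tilde\Pi_{\mathbf 0}^2=r\tilde\Pi_{\mathbf 0}$ makes $\tilde\Pi_{\mathbf 0}$ equal to $r$ times a rank-one idempotent. Its $D_{\mathbf 0}$-coefficient forces the constant $c_{\mathbf p}c_{-\mathbf p}$ to be $\frac{r^2}{d^2(d+1)}$. That gives overlaps $\frac{r^2}{d+1}$, which equal $\frac{r(dr-1)}{d^2-1}$ only if $r=1$. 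So your claim that the $\mathbf p=\mathbf 0$ coefficient of (2) is ``handled by (3)'' is false as written. The consistent normalisation is $\tilde\Pi_{\mathbf p}^2=\tilde\Pi_{\mathbf p}$ with trace $r$, which gives $c_{\mathbf p}c_{-\mathbf p}=\frac{r(d-r)}{d^2(d^2-1)}$ and the stated overlap.
\item Since $D_{\mathbf p}^\dagger=D_{-\mathbf p}=U_PD_{\mathbf p}U_P^\dagger$, parity-Hermiticity of the fiducial is exactly $c_{\mathbf p}\in\R$ for all $\mathbf p$; no comparison with $c_{-\mathbf p}$ enters. Property (1) then transports it as $\tilde\Pi_{\mathbf p}^\dagger=U_P\tilde\Pi_{-\mathbf p}U_P^\dagger$. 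With the same index on both sides, (1) would force $\tilde\Pi_{-\mathbf p}=\tilde\Pi_{\mathbf p}$, contradicting (3).
\item \eqref{eq:tcc} is a homogeneous sum over all $\mathbf q$, including $\mathbf q=\mathbf 0$ and $\mathbf q=\mathbf p$. Those two terms involve $\shin^{\mathbf 0}_{\gamma}(\tau)$ and $\shin^{\mathbf 0}_{\gamma^{-1}}(\tau)$, not $r/d$. So the idempotence equation with those two terms removed is not ``precisely'' \eqref{eq:tcc}. The two terms have to be evaluated and matched against the inhomogeneous right-hand side $(1-2r/d)c_{\mathbf p}$, and that matching is part of what AFK's reduction does.
\end{itemize}
None of this affects the one-line argument, but your sketch cannot stand in for the citation of \cite[Theorem 1.45]{AFK}.
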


The objects constructed by Theorem \ref{thm:ghost} are called \emph{ghost $r$-SICs}. The construction of $r$-SICs from admissible tuples conditional on the identity \eqref{eq:tcc} is described in detail in \cite{AFK}.

The construction of ($r$-)SICs from these ghost ($r$-)SICs relies on the existence of a Galois automorphism with certain properties. The existence of such a Galois automorphism is not known, but it is part of the prediction made by the Stark conjectures regarding the algebraic properties of the numbers $\shin_\gamma^{d^{-1}\mbf{p}}(\tau)$. (See \cite{AFK} and \cite{RW} for details.)

\subsection{Remarks on the proof of the finite pentagon relation}

The continuous pentagon relation for the modular quantum dilogarithm (first proved in the general case by Dimofte \cite{Dimofte}) is a Fourier transform identity, identifying a Fourier transform of a product of two modular quantum dilogarithms (times an Gaussian factor) as a product of three modular quantum dilogarithms (times a Gaussian factor). 
Within the span of one week in September 2026, three sets of authors (Huang \cite{Huang}, Rachenko and Wheeler \cite{RW}, and Gannon, Schopieray, and Yadav \cite{GSY}) posted preprints that prove finite, ``real multiplication value'' analogues of Dimofte's pentagon relation. Only Radchenko and Wheeler prove the general case or prove general algebraicity results. 
All three sets of authors prove a finite pentagon relation using a contour integration argument based on shifting the contour appearing in Dimofte's continuous pentagon relation (or a closely related identity) and summing the residues of the poles in a strip or a rectangular region (although the three proofs differ in some ways). 

In the authors' opinion, the finite pentagon relation may rightly be called a ``higher reciprocity law'' as part of a classical and venerable tradition. The sign of the quadratic Gauss sum, originally proven by Gauss in 1811, is an extension of quadratic reciprocity; it is a finite, ``integer value'' analogue of the Fourier transform formula for a Gaussian $\ee{\frac{p}{q}x^2}$. The proof of the finite pentagon relation is a (more complicated) higher analogue of a residue calculus proof of the evaluation of the quadratic Gauss sum, discovered by Kronecker \cite{Kronecker} and presented in the classic book of Apostol \cite[Theorem 9.16]{Apostol}.

Of course, the finite pentagon relation does not contain an important feature of Artin- or Shimura-style reciprocity laws:~It does not tell us how the Galois group acts. 
Algebraicity is instead obtained from the Ocneanu rigidity theorem for fusion categories, whose known proof is ineffective in the sense that it does not give an explicit description of the set of categorifications of a fusion ring, nor an explicit upper bound on the number of such categorifications. 
That is why, as of yet, these methods have not established Zauner's conjecture (a proof of which would require a Galois automorphism sending $\sqrt{\Delta} \to -\sqrt{\Delta}$  that intertwines with complex conjugation in a particular way), existence of unitary near-group categories (which would also require such a Galois automorphism), or the full rank-$1$ real quadratic Stark conjecture (which includes a complete description of the Galois action on the units).

\subsection{Notational conventions}

The papers \cite{Kopp,AFK} use column vectors exclusively, while the paper \cite{RW} uses row vectors for inputs to the function $F_\gamma^\pm$. The present paper translates between conventions of the two sets of authors. Hence, we make conventions explicit by writing column vectors as boldface lowercase letters $\mbf{x}$, row vectors as underlined lowercase letters $\ul{x}$, and modules of integer vectors as
\begin{align}
    \Z^2_{\col} &= \{\text{column vectors } \mbf{x} = \smcoltwo{x_1}{x_2} : x_1, x_2 \in \Z\}, \\
    \Z^2_{\row} &= \{\text{row vectors } \ul{x} = (x_1,x_2) : x_1, x_2 \in \Z\}.
\end{align}

In Section \ref{sec:general}, we use $\asf,\bsf,\csf,\dsf$ as entries of a two-by-two matrix $\gamma = \smmattwo{\asf}{\bsf}{\csf}{\dsf} \in \SL_2(\Z)$ with $c>0$ and $\Tr(\gamma)>2$. The matrix $\gamma$ is used throughout the paper and retains these properties, but its entries are only written explicitly in Section \ref{sec:general}.
Elsewhere in the paper, we use $a,b,c$ as coefficients of an integral binary quadratic form $Q$, and we use $\dim$ as the dimension of a vector space $\C^\dim$.

In Sections \ref{sec:general} and \ref{sec:subgroups}, we use $\varepsilon$ for the largest eigenvalue of the matrix $\gamma$, following the convention of \cite{RW}. 
In Sections \ref{sec:sicsetup} and \ref{sec:tcc}, we instead use $\varepsilon$ for the smallest totally positive algebraic unit $\varepsilon > 1$ in a real quadratic field $K$, following the convention of \cite{AFK}. 
The largest eigenvalue of $\gamma$ is then $\varepsilon^{j(2m+1)}$ in terms of this unit.

We use $\langle \ul{x}, \ul{y} \rangle_\gamma$ (taking row vectors, with the subscript $\gamma$) to denote a specific symmetric bicharacter, following the convention of \cite{RW}. We use $\langle \mbf{p}, \mbf{q} \rangle$ (taking column vectors, without a subscript) to denote a specific symplectic (not symmetric!) form, following the convention of \cite{AFK}.

We write $\ee{z} = \e^{2\pi i z}$ for the complex exponential function. We also write $N = \tr(\gamma)-2$ throughout.

\subsection*{Acknowledgments and AI use disclosure}

GSK thanks Xingting Wang and Qing Zhang for conversations about fusion categories that informed some expository content in this paper.

All material in the paper was written by the authors without AI assistance. 

\section{General translation between notations for real multiplication values of the modular quantum dilogarithm}\label{sec:general}

As a reminder to the reader, in this section only, $d$ is a matrix element, not a dimension. 

We now derive the relationship between the finite quantum dilogarithm $F^{\pm}_{\gamma}(\ul{x})$, which plays a central role in \cite{RW}, and the Shintani--Faddeev modular cocycle $\sfc{\mathbf{r}}{\gamma}{\tau}$, which plays a central role in \cite{AFK}.
\begin{proposition}\label{prop:Ftoshin}
    Let $\mathbf{u} \in \Z_{\col}^2$ and let $\gamma= \left(\begin{smallmatrix} \asf & \bsf \\ \csf & \dsf\end{smallmatrix}\right) \in \SL_2(\mathbb{Z})$ with $\Tr(\gamma)>2$ and $\csf>0$. 
    If $\mathbf{u} \nin (\gamma^{\top}-I)\Z_{\col}^2$, then
    \begin{align}
        F_\gamma^\pm(\mathbf{u}^\top) 
        &= \e^{\frac{\pi i}{12} \Psi(\gamma)}\,\sfc{\frac{1}{N}(I-\gamma^{-1})S\mathbf{u}}{\gamma}{\tau}^{-1},
    \end{align}
    while if $\mathbf{u} \in (\gamma^{\top}-I)\Z_{\col}^2$, then
    \begin{align}
        F_\gamma^\pm(\mathbf{u}^\top) 
        &= 
        \begin{cases}
            \left(\e^{- \frac{\pi i}{12} \Psi(\gamma)}\,\sfc{\frac{1}{N}(I-\gamma^{-1})S\mathbf{u}}{\gamma}{\tau}\right)^{\pm 1} \quad \text{if } \left((I-\gamma^{-1})S\mbf{u}\right)_2 > 0, \\
            \left(\e^{-\frac{\pi i}{12} \Psi(\gamma)}\,\sfc{\frac{1}{N}(I-\gamma^{-1})S\mathbf{u}}{\gamma}{\tau}\right)^{\mp 1} \quad \text{if } \left((I-\gamma^{-1})S\mbf{u}\right)_2  \leq 0.
        \end{cases}
    \end{align}
    Here $S=\left(\begin{smallmatrix} 0 & -1 \\ 1 & 0 \end{smallmatrix}\right)$, $N=\Tr(\gamma) -2$,   $\Psi(\gamma)$ is the Rademacher class invariant~\cite[Definition 1.29]{AFK}, and $\tau$ is the fixed point of $\gamma$ specified by 
    \begin{equation}
        \tau = \frac{\asf-\dsf + \sqrt{N(N+4)}}{2\csf}.
    \end{equation}.
\end{proposition}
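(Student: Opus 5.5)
Comparing the two definitions directly is the natural route, since both functions are finite products of values of Faddeev's noncompact quantum dilogarithm. The modular cocycle $\sfc{\rr}{\gamma}{\tau}$ of \cite{Kopp} is defined for $\rr\in\mathbb{Q}^2$ by an infinite product, or equivalently an integral, formula for $\gamma$ hyperbolic. At real multiplication points it factors, by the results of \cite{Kopp} (see also \cite[\S1]{AFK}), as a finite product of double sine values
\[
\prod_{k} \varphi\bigl(\text{linear form in } \tau, \rr\bigr)
\]
together with an explicit exponential prefactor involving $\Psi(\gamma)$. On the other side, RW define $F^\pm_\gamma(\ul{x})$ for $\ul{x}\in\Z^2_{\row}$ through the same noncompact dilogarithm, evaluated at points determined by $\ul{x}$ and the eigenvector of $\gamma^\top$. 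The dictionary then reduces to three steps: match the quantum parameters, match the arguments, and match the normalizing constants.

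\textbf{Step 1: the quantum parameter.} Both constructions use the same parameter $b^2=\tau$ or $\tau'$ up to the sign choice, with $\tau$ the fixed point given in the statement. I will check that RW's eigenvalue $\varepsilon$, the largest eigenvalue of $\gamma$, satisfies $\varepsilon = \csf\tau+\dsf$ (or its conjugate). The denominators are then governed by $\det(\gamma-I) = 2-\Tr(\gamma) = -N$. This explains the factor $\frac1N$: inverting $\gamma^\top - I$ over $\mathbb{Q}$ produces the adjugate divided by $-N$.

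\textbf{Step 2: the change of variables.} RW's row vector $\ul{x}=\mbf{u}^\top$ pairs against $\gamma$ through their bicharacter. Converting that pairing to AFK's symplectic form introduces $S$, since $\langle \mbf{x},\mbf{y}\rangle = \mbf{x}^\top S\mbf{y}$ up to sign. Converting RW's quotient $\Z^2/(\gamma^\top-I)\Z^2$ to the AFK index $\rr\in N^{-1}\Z^2$ introduces $(I-\gamma^{-1})$. Concretely, I will verify the matrix identity
\[
S(\gamma^\top-I)^{-1} = -\tfrac{1}{N}(I-\gamma^{-1})S\cdot(\text{unimodular adjustment}),
\]
which uses $S\gamma^\top S^{-1}=\gamma^{-1}$ for $\gamma\in\SL_2(\Z)$. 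This identifies $\rr=\frac1N(I-\gamma^{-1})S\mbf{u}$. I also need the periodicity and functional equation of $\shin$ under $\rr\mapsto\rr+\Z^2$, so that the dependence on $\mbf{u}$ modulo $(\gamma^\top-I)\Z^2$ matches.

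\textbf{Step 3: constants and the degenerate case.} For generic $\mbf{u}$, meaning $\rr\notin\Z^2$, the cocycle has no zero or pole and the identification should be exact up to the Rademacher phase $\e^{\pi i\Psi(\gamma)/12}$, with an inversion because the two papers use reciprocal conventions. I will confirm the phase by comparing with the known $\rr=\mbf{0}$ normalization of $\shin_\gamma$ (the Dedekind eta multiplier) in \cite[Def.~1.29]{AFK}. The main obstacle is the case $\mbf{u}\in(\gamma^\top-I)\Z^2$, i.e.\ $\rr\in\Z^2$. There RW regularize $F^\pm$ differently, dropping a vanishing factor $\varphi$ at a pole and choosing $\pm$. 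Meanwhile $\shin^{\rr}_\gamma$ differs from $\shin^{\mbf{0}}_\gamma$ by the shift formula, whose behavior depends on the sign of $r_2$. This explains the case split on $\left((I-\gamma^{-1})S\mbf{u}\right)_2>0$. I would handle it by reducing to $\rr=\mbf{0}$ via the shift formula of \cite{Kopp}, tracking carefully which boundary factor is removed, and then matching with RW's definition of $F^\pm$ at the origin. I expect the sign and exponent bookkeeping here, including the swap between the exponents $\pm1$ and $\mp1$, to be the most error-prone part.
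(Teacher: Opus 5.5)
\textbf{There is a genuine gap, and it is in Step~2, where the actual proof has to happen.}

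The proposition equates specific numbers. So $\rr$ must be read off from the argument in RW's definition,
$F_\gamma^\pm(\mbf u^\top)=\mu_\gamma\,\Phi_{\gamma,u_1,0}\bigl(\tfrac{u_1\tau+u_2}{\varepsilon^{-1}-1};\tau\bigr)$ with $\varepsilon=\csf\tau+\dsf$. Your Step~2 never uses this argument. Instead it matches group structures: the bicharacter against the symplectic form ``up to sign'', $\Z^2/(\gamma^\top-I)\Z^2$ against $N^{-1}\Z^2/\Z^2$, and an unspecified unimodular adjustment.

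\begin{itemize}
\item Such matching fixes $\rr$ only up to an automorphism of $N^{-1}\Z^2/\Z^2$. But $\shin^{\rr}_\gamma(\tau)$ is not invariant under such automorphisms; in general $\shin^{-\rr}_\gamma(\tau)\neq\shin^{\rr}_\gamma(\tau)$.
\item Your own identity shows the danger. It forces the adjustment to be $\gamma^{-\top}$, since $S(\gamma^\top-I)^{-1}=-\tfrac1N(\gamma-I)S=-\tfrac1N(I-\gamma^{-1})S\gamma^{-\top}$.
\item Moreover $\tfrac1N(I-\gamma^{-1})S(\gamma^{-\top}-I)=S$ is integral. So taking $\rr=S(\gamma^\top-I)^{-1}\mbf u$ gives $\rr\equiv-\tfrac1N(I-\gamma^{-1})S\mbf u$ modulo $\Z^2$. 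That is the opposite sign to the one you then ``identify''.
\end{itemize}
The identification is reverse-engineered from the target rather than derived.

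\textbf{The missing ingredients are concrete, and they make the double-sine factorization unnecessary.} Neither source sets these objects up as finite products of double sines, and you do not say how factors on the two sides would be matched.

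\begin{itemize}
\item RW's $\Phi_{\gamma,m,n}$ is exactly the reciprocal of Kopp's Shintani--Faddeev Jacobi cocycle: $\Phi_{\gamma,m,n}(z;\tau)=\sigma_{\mbf v,\gamma}(m\tau+z,\tau)^{-1}$ with $\mbf v=(-m\bsf,\,n-\dsf m)^\top$.
\item Kopp's Proposition~4.19 gives $\shin^{\rr}_A(\tau)=\sigma_{(I-A)\rr,A}(r_2\tau-r_1,\tau)$.
\item Using $\varepsilon^{-1}=\asf-\csf\tau$ and $(\varepsilon-1)(\varepsilon^{-1}-1)=-N$, one rewrites $u_1\tau+\tfrac{u_1\tau+u_2}{\varepsilon^{-1}-1}=r_2\tau-r_1$ with $\rr=u_1\mathbf e_2+\tfrac1N(I-\gamma^{-1})S\mbf u$.
\item Then $(I-\gamma)\rr=(u_2-\bsf u_1,\,-\dsf u_1)^\top$ differs from $\mbf v=(-\bsf u_1,\,-\dsf u_1)^\top$ only by the integer $u_2$ in the first coordinate. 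The function $\sigma$ depends on that coordinate only modulo $1$.
\item Finally, $u_1\mathbf e_2$ is removed by periodicity of $\shin$ in $\rr$ at RM points.
\end{itemize}

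\textbf{The phase needs no calibration.} It is the eta multiplier $\mu_\gamma$ already present in RW's definition, and it equals $\e^{\pi i\Psi(\gamma)/12}$ by Rademacher. Calibrating at $\rr=\mbf 0$ could not work anyway. On $(\gamma^\top-I)\Z^2_{\col}$, RW neither specialize the generic formula nor drop a vanishing factor; they simply define $F^\pm_\gamma(\mbf u^\top):=\varepsilon^{\pm1/2}$.

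\textbf{The degenerate case} is then a direct comparison of that definition with Kopp's evaluation of $\shin^{\rr}_\gamma(\tau)$ at integral $\rr$ (Theorem~4.38). That evaluation depends on the sign of $r_2$, which is where the case split comes from. Your plan to reach it via the shift formula is compatible with this.
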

\begin{proof}
Assume $\mathbf{u} \nin (\gamma^{\top}-I)\Z_{\col}^2$.
Radchenko and Wheeler define $F_\gamma^\pm$ in \cite[eq.~(2)]{RW} as
\begin{equation}
    F_\gamma^\pm(\mathbf{u}^\top) = \mu_\gamma\,\Phi_{\gamma,u_1,0}\left(\frac{u_1\tau+u_2}{\varepsilon^{-1}-1};\tau\right), \qquad \text{where }\  \varepsilon = \csf \tau + \dsf.
    \label{eq:FExpression}
\end{equation}
In this expression $\Phi$ is Fadeev's modular quantum dilogarithm, defined to be the analytic continuation of 
\begin{equation}
\Phi_{\gamma,m,n}(z;\tau) = \frac{\left(\ee{m\tau + z};\ee{\tau}\right)_\infty}{\left(\ee{n\frac{\asf\tau+\bsf}{\csf\tau+\dsf} + \frac{z}{\csf\tau+\dsf}};\ee{\frac{\asf\tau+\bsf}{\csf\tau+\dsf}}\right)_\infty}, \qquad \tau \in \C \setminus \R,
\end{equation}
where $(x,q)_\infty$ is the $q$-Pochhammer symbol and $\mu_\gamma$ is the multiplier system of the Dedekind eta function defined by
\begin{equation}
    \mu_{\gamma} = \frac{\eta(\gamma z)}{\eta(z)\sqrt{\csf z + \dsf}}
\end{equation}
for arbitrary $z$ in the upper half plane $\mathbb{H}$, and where  the principal branch of the square root is taken. 
The function $\Phi$ continues analytically  to $\tau \in \C_\gamma = \{\tau \in \C : j_\gamma(\tau) \nin (-\infty,0]\}$. 
(In \cite{AFK} the domain is denoted $\mathcal{D}_\gamma$.)
It follows from a result of Rademacher, which we stated as \cite[Proposition 5.1]{AFK}, that 
\begin{equation}
    \mu_\gamma = \e^{\frac{\pi i}{12} \Psi(\gamma)}
\end{equation}
where $\Psi(\gamma)$ is an explicit integer-valued class function called the \emph{Rademacher invariant} (or \emph{Rademacher symbol}).
Faddeev's modular quantum dilogarithm is basically the same thing as what in \cite{AFK,Kopp} is called the Shintani--Faddeev Jacobi cocycle.  To be precise, if the latter is specified via \cite[Definition 4.16]{Kopp} one has
\begin{equation}
    \Phi_{\gamma,m,n}(z;\tau) = \sigma_{\mathbf{v},\gamma}(m\tau+z,\tau)^{-1}
    \label{eq:moddilogTermsSFJacobi}
\end{equation}
where 
$\mathbf{v}=\smcoltwo{-m\bsf}{n-\dsf m}$. 
In the case of interest to us $m$ is  an integer so we can write instead $\mathbf{v}=\smcoltwo{0}{n-\dsf m}$. 
If instead we define $\sigma$ via \cite[Definition 1.16]{AFK} one has
\begin{equation}
    \Phi_{\gamma,m,n}(z;\tau) = 
    \frac{\left(\ee{\frac{z}{\csf \tau + \dsf}}, \ee{\frac{\asf\tau+\bsf}{\csf\tau+\dsf}}\right)_n}{(\ee{z},\ee{\tau})_m}\, \sigma_{\gamma}(z,\tau)^{-1}
\end{equation}
where the finite q-Pochhammer symbols in the prefactor are as specified by \cite[Definition 1.14]{AFK}, or \cite[eqs.~(48), (49)]{RW}.  The two definitions of $\sigma$ are related by $\sigma_{\gamma} = \sigma_{\boldsymbol{0},\gamma}$.

From 
$\varepsilon = \csf\tau+\dsf$, we have $\varepsilon^{-1} = -\csf\tau+\asf$, and $(\varepsilon-1)(\varepsilon^{-1}-1)=-N$, from which we obtain
\begin{align}
    \frac{1}{\varepsilon^{-1}-1} &= -\frac{\csf}{N}\tau + \frac{1-\dsf}{N};
    &
    \frac{\tau}{\varepsilon^{-1}-1} &= \frac{1-\asf}{N}\tau - \frac{\bsf}{N}.
\end{align}
Thus
\begin{equation}
    F_\gamma^\pm(\mathbf{u}^\top) = \e^{\frac{\pi i}{12} \Psi(\gamma)}\,\sigma_{\smcoltwo{0}{-du_1},\gamma}\!\left(\frac{(N+1-\asf)u_1 - \csf u_2}{N}\tau - \frac{\bsf u_1+(\dsf-1)u_2}{N}\right)^{-1}. 
\end{equation}
By \cite[Proposition 4.19]{Kopp}, we have the identity $\sfc{\rr}{A}{\tau} = \sigma_{(I-A)\rr,A}(r_2\tau-r_1,\tau)$. Also, if
\begin{equation}
    \rr = \frac{1}{N}\smcoltwo{\bsf u_1+(\dsf-1)u_2}{(N+1-\asf)u_1-\csf u_2}
    = u_1\mathbf{e}_2 + \frac{1}{N}(I-\gamma^{-1})S\mathbf{u},
\end{equation}
where $\mathbf{e}_2 = \smcoltwo{0}{1}$, then 
\begin{align}
    (I-\gamma)\rr 
    &= u_1(I-\gamma)\mathbf{e}_2 + \frac{1}{N}(2I-\gamma-\gamma^{-1})S\mathbf{u} \\
    &= u_1\smcoltwo{-\bsf}{1-\dsf} + \frac{1}{N}(-NI)S\smcoltwo{u_1}{u_2} \\
    &= \smcoltwo{-\bsf u_1}{(1-\dsf)u_1} + \smcoltwo{u_2}{-u_1} \\
    &= \smcoltwo{-\bsf u_1+u_2}{-\dsf u_1}.
\end{align}
The function $\sigma_{\mathbf{m},A}(z,\tau)$ does not depend on $m_1$ modulo $1$, so
\begin{align}
    F_\gamma^\pm(\mathbf{u}^\top) 
    &= \e^{\frac{\pi i}{12} \Psi(\gamma)}\, \sigma_{\smcoltwo{-\bsf u_1+u_2}{-\dsf u_1},\gamma}\!\left(\frac{(N+1-\asf)u_1 - \csf u_2}{N}\tau - \frac{\bsf u_1+(\dsf-1)u_2}{N}\right)^{-1} \\
    &= \e^{\frac{\pi i}{12} \Psi(\gamma)}\, \sfc{u_1\mathbf{e}_2 + \frac{1}{N}(I-\gamma^{-1})S\mathbf{u}}{\gamma}{\tau}^{-1} \\
    &= \e^{\frac{\pi i}{12} \Psi(\gamma)}\, \sfc{\frac{1}{N}(I-\gamma^{-1})S\mathbf{u}}{\gamma}{\tau}^{-1},
\end{align}
where we have also used the periodicity of $\shin$ in the the $\rr$ variable at RM values \cite[Proposition 4.35]{Kopp}. This completes the proof in the case when $\mathbf{u} \nin (\gamma-I)\Z_{\col}^2$.

In the case when $\mathbf{u} \in (\gamma^{\top}-I)\Z_{\col}^2$ (equivalently, when $\frac{1}{N}(I-\gamma^{-1})S\mathbf{u}\in \Z_{\col}^2$), the result may be deduced by comparing the definition $F_\gamma^\pm(\mathbf{u}^\top) := \varepsilon^{\pm 1/2}$ given in the sentence preceding \cite[eq.~(2)]{RW} to the evaluation of $\shin^\rr$ at integer values of $\rr$ given by \cite[Theorem 4.38]{Kopp}.
\end{proof}

\section{Finite pentagon relation for subgroups}\label{sec:subgroups}

We deduce a pentagon relation for subgroups of $G$ as a straightforward consequence of \cite[Theorem 2]{RW}; that relation will be used in Section \ref{sec:tcc} to prove the twisted convolution identity of \cite{AFK}. Before doing so, we recall the definitions of the lattice $\Lambda$, group $G$, and bicharacter $\langle \ul{g},\ul{h} \rangle_\gamma$ from \cite{RW}.

RW define $\Lambda = \Lambda_\gamma = N\Z_{\row}^2 + (N\Z_{\row}^2)(\gamma-I)^{-1}$ and $G = \Z^2/\Lambda$. (In fact, they write $\Lambda = N\Z_{\row}^2 + \ker(\gamma)$, with the notation $\ker(\gamma-1)$ denoting the ``mod $N$ kernel,'' that is, the preimage of $N\Z_{\row}^2$ under $\gamma-I$.) Using the identity $(\gamma-I)(I-\gamma^{-1}) = NI$, we see that $\Lambda = N\Z_{\row}^2 + \Z_{\row}^2(I-\gamma^{-1}) = \Z_{\row}^2(I-\gamma^{-1})$. Thus, the group $G = \Z_{\row}^2/\Z_{\row}^2(I-\gamma^{-1})$. (Because $\Z_{\row}^2\gamma = \Z_{\row}^2$, we can also write $\Lambda = \Z_{\row}^2(\gamma-I)$ and $G = \Z_{\row}^2/\Z_{\row}^2(\gamma-I)$.) The cardinality of this group is $\abs{G} = \abs{\det(\gamma-I)} = \abs{(\varepsilon-1)(\varepsilon^{-1}-1)} = \abs{2-\tr(\gamma)} = N$.

RW define a symmetric bicharacter $\langle \cdot, \cdot \rangle_\gamma : G \times G \to \C^\times$ by the formula
\begin{equation}
    \langle \ul x, \ul y \rangle_\gamma 
    = \ee{-\tfrac{1}{N} \ul x (\gamma-I)S \ul y^\top}.
\end{equation}
It is clearly a bicharacter ($\langle \ul x_1 + \ul x_2, \ul y \rangle_\gamma = \langle \ul x_1, \ul y \rangle_\gamma \langle \ul x_2, \ul y \rangle_\gamma$ and $\langle \ul x, \ul y_1 + \ul y_2 \rangle_\gamma = \langle \ul x, \ul y_1 \rangle_\gamma \langle \ul x, \ul y_2 \rangle_\gamma$). From the matrix identities
\begin{equation}
        (\gamma-I)S - \frac{N}{2}S = \left((\gamma-I)S\right)^{\!\top} + \frac{N}{2}S = \frac{1}{2}\left(\gamma S + (\gamma S)^\top\right),
\end{equation}
it may be seen that
\begin{equation}
    \langle \ul x, \ul y \rangle_\gamma 
    = \ee{-\tfrac{1}{N} \ul x (\gamma-I)S \ul y^\top}
    = \ee{-\tfrac{1}{N} \ul x ((\gamma-I)S)^\top \ul y^\top}
    = (-1)^{\ul x S \ul y^\top} \ee{-\tfrac{1}{N} \ul x (\gamma S + (\gamma S)^\top) \ul y^\top},
\end{equation}
and thus that $\langle \ul x, \ul y \rangle_\gamma$ is symmetric ($\langle \ul y, \ul x \rangle_\gamma = \langle \ul x, \ul y \rangle_\gamma$). It may be further checked directly that this bicharacter is nondegenerate (that is, if $\langle \ul x, \ul y \rangle_\gamma = 1$ for all $\ul x \in G$, then $\ul y = \ul 0$, and if $\langle \ul x, \ul y \rangle_\gamma = 1$ for all $\ul y \in G$, then $\ul x = \ul 0$).

We are now ready to deduce a pentagon relation for subgroups from the results of \cite{RW}. If the RW pentagon relation is thought of as an evaluation of a higher Gauss sum of sorts, the pentagon relation for subgroups is by analogy a higher Landsberg--Schaar relation.
(See, e.g., \cite{Moore} for the Landsberg--Schaar relation.)

\begin{theorem}\label{thm:subgrouppentagon}
    Fix notation as in \cite{RW}. Let $H$ be a proper subgroup of $G$, and let 
    \begin{equation}
    H^\vee = \{\ul{g} \in G : \langle \ul g,\ul h \rangle_\gamma = 1 \text{ for all } \ul h\in H\}.
    \end{equation}
    If $\ul y, \ul v \in G$, then
    \begin{align}
        \frac{1}{\sqrt{\abs{H}}} \sum_{\ul x \in H} F_\gamma^+(\ul x - \ul y) &= \frac{\mu_\gamma}{\sqrt{\abs{H^\vee}}} \sum_{\ul u \in H^\vee} \langle \ul y; \ul u\rangle_\gamma \frac{1}{F_\gamma^-(\ul u)}; \label{eq:subgrouppentagon1} \\
        \frac{1}{\sqrt{\abs{H}}} \sum_{\ul x \in H} \frac{F_\gamma^+(\ul x - \ul y)}{F_\gamma^-(\ul x - \ul y + \ul v)} &= \frac{1}{\sqrt{\abs{H^\vee}}F_\gamma^-(\ul v)} \sum_{\ul u \in H^\vee} \langle \ul y; \ul u\rangle_\gamma \frac{F^+(\ul u + \ul v)}{F^-(\ul u)} \quad \text{ for } \ul v \neq \ul 0. \label{eq:subgrouppentagon2}
    \end{align}
\end{theorem}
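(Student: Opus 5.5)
The plan is to derive both identities from the full-group pentagon relations of \cite[Theorem 2]{RW} by Poisson summation on the finite abelian group $G$. I read \cite[Theorem 2]{RW} as two finite Fourier transform identities on $G$ with respect to $\langle\cdot,\cdot\rangle_\gamma$:
\begin{equation}
\frac{1}{\sqrt{|G|}}\sum_{\ul x\in G}\langle \ul x,\ul u\rangle_\gamma F_\gamma^+(\ul x) = \frac{\mu_\gamma}{F_\gamma^-(\ul u)},
\qquad
\frac{1}{\sqrt{|G|}}\sum_{\ul x\in G}\langle \ul x,\ul u\rangle_\gamma \frac{F_\gamma^+(\ul x)}{F_\gamma^-(\ul x+\ul v)} = \frac{F_\gamma^+(\ul u+\ul v)}{F_\gamma^-(\ul u)\,F_\gamma^-(\ul v)} \quad (\ul v\neq \ul 0).
\end{equation}
The character convention here, $\langle\ul x,\ul u\rangle_\gamma$ versus its inverse, may need adjusting against RW; this is handled in the last step. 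I set $f_1(\ul x)=F_\gamma^+(\ul x)$ and $f_2(\ul x)=F_\gamma^+(\ul x)/F_\gamma^-(\ul x+\ul v)$, and write $\hat f(\ul u)=|G|^{-1/2}\sum_{\ul x\in G}\langle\ul x,\ul u\rangle_\gamma f(\ul x)$. The two displayed identities then say exactly that $\hat f_1$ and $\hat f_2$ equal the summands on the right-hand sides of \eqref{eq:subgrouppentagon1} and \eqref{eq:subgrouppentagon2}, apart from the character factor.

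The first step is character-theoretic. The bicharacter $\langle\cdot,\cdot\rangle_\gamma$ is nondegenerate, as noted above, so $\ul u\mapsto\langle\cdot,\ul u\rangle_\gamma$ is an isomorphism from $G$ onto $\widehat G$. Under this isomorphism $H^\vee$ corresponds to the annihilator of $H$, which is isomorphic to $\widehat{G/H}$. Hence $|H|\,|H^\vee|=|G|$, and orthogonality of characters on $G/H$ gives the indicator formula
\begin{equation}
\mathbf 1_H(\ul z)=\frac{|H|}{|G|}\sum_{\ul u\in H^\vee}\langle \ul z,\ul u\rangle_\gamma .
\end{equation}

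The second step is the Poisson summation itself. I will write $\sum_{\ul x\in H}f(\ul x-\ul y)=\sum_{\ul z\in G}\mathbf 1_H(\ul z+\ul y)f(\ul z)$ and insert the indicator formula. Using bimultiplicativity, $\langle \ul z+\ul y,\ul u\rangle_\gamma=\langle\ul z,\ul u\rangle_\gamma\langle\ul y,\ul u\rangle_\gamma$, and swapping the order of summation gives
\begin{equation}
\sum_{\ul x\in H}f(\ul x-\ul y)=\frac{|H|}{\sqrt{|G|}}\sum_{\ul u\in H^\vee}\langle\ul y,\ul u\rangle_\gamma\,\hat f(\ul u).
\end{equation}
Dividing by $\sqrt{|H|}$ and using $\sqrt{|H|}/\sqrt{|G|}=1/\sqrt{|H^\vee|}$ produces the normalizations in \eqref{eq:subgrouppentagon1} and \eqref{eq:subgrouppentagon2}. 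Substituting $\hat f_1$ and $\hat f_2$ from \cite[Theorem 2]{RW} then yields the two identities; the factor $1/F_\gamma^-(\ul v)$ does not depend on $\ul u$ and comes out of the sum.

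The main obstacle I expect is bookkeeping of signs and conventions rather than any conceptual difficulty. If RW's transform uses $\langle\ul x,\ul u\rangle_\gamma^{-1}=\langle -\ul x,\ul u\rangle_\gamma$, or writes its identity with $\ul u$ shifted or negated, then I will use $H^\vee=-H^\vee$ to reindex the sum by $\ul u\mapsto-\ul u$. I will also check whether the character factor comes out as $\langle \ul y,\ul u\rangle_\gamma$ or its inverse, and if needed reverse the sign of $\ul y$ by applying the argument to $\ul y\mapsto -\ul y$ consistently. Symmetry of $\langle\cdot,\cdot\rangle_\gamma$ ensures the order of the arguments does not matter. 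Finally, the hypothesis that $H$ is proper is not needed by this argument: the case $H=G$, $H^\vee=\{\ul 0\}$, is just \cite[Theorem 2]{RW} itself, with $F_\gamma^-(\ul 0)=\varepsilon^{-1/2}$.
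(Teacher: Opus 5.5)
Your proposal is correct and takes essentially the same route as the paper. The paper also uses nondegeneracy of $\langle\cdot,\cdot\rangle_\gamma$ to identify $H^\vee$ with the annihilator of $H$, then multiplies the full-group RW identities (read with the same character convention as yours) by $\langle \ul y,\ul u\rangle_\gamma$ and sums over $\ul u\in H^\vee$; inserting the indicator $\mathbf 1_H(\ul z)=\frac{|H|}{|G|}\sum_{\ul u\in H^\vee}\langle \ul z,\ul u\rangle_\gamma$ and swapping sums, as you do, is that computation run in reverse, and your formula is the precise form of the orthogonality relation the paper writes loosely as $|H^\vee|\,\delta_{\ul x+\ul y}$.
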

\begin{proof}
    The bicharacter $\langle\cdot,\cdot\rangle_\gamma$ defined is nondegenerate. As holds for a general non-degenerate bicharacter, the map $\ul g \mapsto \left(\ul h \mapsto \langle \ul g, \ul h \rangle_\gamma\right)$ defines an isomorphism from $G$ to its character group (or Pontryagin dual group) $\widehat{G}$. Under this isomorphism, $H^\vee$ is identified with the subgroup of characters vanishing on $H$. It follows that $H^{\vee\vee} = H$ and $\abs{H} \abs{H^\vee} = \abs{G}$.

    Note that 
    \begin{equation}
        \sum_{\ul u \in H^\vee} \langle \ul y; \ul u \rangle_\gamma \langle \ul x; \ul u \rangle_\gamma 
        = \sum_{\ul u \in H^\vee} \langle \ul x + \ul y; \ul u \rangle_\gamma
        = \abs{H^\vee} \delta_{\ul x+ \ul y}.
    \end{equation}
    Thus, summing \cite[eq.~(5)]{RW} over $\ul u \in H^\vee$ with coefficients $\langle \ul y; \ul u\rangle_\gamma$ immediately yields \eqref{eq:subgrouppentagon1}, and doing the same with \cite[eq.~(5)]{RW} immediately yields \eqref{eq:subgrouppentagon2}.
\end{proof}

\section{Admissible tuples and algebraic input data in the \texorpdfstring{$r$}{r}-SIC case}\label{sec:sicsetup}

We now specialize to an AFK admissible tuple. 
From Definition~\ref{def:admissible-tuple}, an admissible tuple is an ordered triple $(\dim,r,Q)$ satisfying certain conditions. 
Ref.~\cite{AFK} also gives a second equivalent definition in terms of alternative data, which we also need. 

Let $K$ be a real quadratic field with fundamental discriminant $\Delta_0$, and let $\varepsilon>1$ be its least totally positive unit.
For nonnegative integers $j,m$, define the sequence of conductors $f_j$, rank parameters $r_{j,m}$, and dimension parameters $d_{j,m}, d_j$ by
\begin{align}
    f_j &= \frac{\varepsilon^j-\varepsilon^{-j}}{\sqrt{\Delta_0}} \\
    r_{j,m} & = \frac{f_{jm}}{f_j} = \frac{\varepsilon^{jm}-\varepsilon^{-jm}}{\varepsilon^j-\varepsilon^{-j}} \label{eq:rjm} \\
    d_{j,m} &= r_{j,m+1}+r_{j,m} 
    = \frac{\varepsilon^{j(m+1)}-\varepsilon^{-jm}}{\varepsilon^j-1} 
    = \sum_{k=-m}^m \varepsilon^{jk} \label{eq:djm}\\
    d_j &= d_{j,1} = \varepsilon^j + 1 + \varepsilon^{-j}.
\end{align}
A primitive irreducible integral binary quadratic form $Q$ of discriminant $f^2\Delta_0$ is \emph{admissible} with these data when $f\mid f_j$.
We then define an \emph{alternative admissible tuple} as a quadruple
\[
 t=(K,j,m,Q)
\]
satisfying the above criteria. 

By~\cite[Thm.~1.25]{AFK}, admissible tuples $(\dim,r,Q)$ from Definition~\ref{def:admissible-tuple} are in canonical bijective correspondence with alternative admissible tuples $(K,j,m,Q)$ for the same quadratic form $Q$. 
We identify admissible tuples related by this bijection using the notation
\[
 t=(\dim,r,Q) \sim (K,j,m,Q).
\]
We drop the term ``alternative'' and refer to both types as simply \emph{admissible tuples} since they specify equivalent data and the type is clear from context. 

For a binary quadratic form $Q$, write $Q(x,y) = ax^2+bxy+cy^2$, and also use the symbol $Q$ to denote the half-Hessian matrix $Q = \smmattwo{a}{\frac{b}{2}}{\frac{b}{2}}{c}$. 
Following~\cite[Definition 1.28]{AFK}, we define certain stabilizers of a quadratic form $Q$ associated to an admissible tuple. 
With the admissible tuple understood from context, define 
\[
L = 
\frac{d_j-1}{2}I + \frac{f_j}{f}SQ 
\quad \text{and }\ A 
= L^{2m+1}.
\]
In the notation of \cite{RW}, we will take $\gamma = A$ and set $N = \Tr(\gamma)-2$.
For convenience and agreement with \cite{RW}, we assume that $\gamma_{21}>0$; it is always possible to replace $Q$ by an $\SL_2(\Z)$-equivalent quadratic form to make that true.

We prove several short, elementary lemmas about these parameters.
\begin{lemma}\label{lem:N}
    $N = (d_j-3)d_{j,m}^2.$
\end{lemma}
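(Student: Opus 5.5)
We want $N=\Tr(L^{2m+1})-2$ expressed via eigenvalues. The plan is to identify the eigenvalues of $L$ and then collapse the trace formula algebraically.

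\emph{Eigenvalues of $L$.} The matrix $SQ$ has trace $0$ and determinant $\det Q = ac-\tfrac{b^2}{4} = -\tfrac{f^2\Delta_0}{4}$, since $Q$ has discriminant $f^2\Delta_0$. So its eigenvalues are $\pm\tfrac{f\sqrt{\Delta_0}}{2}$. Consequently $\tfrac{f_j}{f}SQ$ has eigenvalues
\[
\pm\tfrac{f_j\sqrt{\Delta_0}}{2} = \pm\tfrac{\varepsilon^j-\varepsilon^{-j}}{2}.
\]
Adding $\tfrac{d_j-1}{2}I = \tfrac{\varepsilon^j+\varepsilon^{-j}}{2}I$, the eigenvalues of $L$ are $\varepsilon^{j}$ and $\varepsilon^{-j}$. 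This is consistent with $\det L=1$ and $\Tr L = d_j-1$.

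\emph{Trace of $\gamma$.} It follows that
\[
\Tr\gamma = \varepsilon^{j(2m+1)}+\varepsilon^{-j(2m+1)}.
\]
Write $x=\varepsilon^{j/2}$, so $\varepsilon^{j}=x^2$. Then
\[
N = \Tr\gamma-2 = \bigl(x^{2m+1}-x^{-(2m+1)}\bigr)^2.
\]

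\emph{Factoring.} From \eqref{eq:djm},
\[
d_{j,m} = \sum_{k=-m}^m x^{2k} = \frac{x^{2m+1}-x^{-(2m+1)}}{x-x^{-1}},
\]
using the standard geometric sum with ratio $x^2$, centered. Hence
\[
N = d_{j,m}^2\,\bigl(x-x^{-1}\bigr)^2 = d_{j,m}^2\,\bigl(\varepsilon^j-2+\varepsilon^{-j}\bigr) = d_{j,m}^2\,(d_j-3).
\]

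There is no real obstacle here. The only points needing care are the sign convention for $\det Q$ relative to the discriminant, and the fact that $x=\varepsilon^{j/2}$ is used purely as a formal algebraic device (it is real, since $\varepsilon>1$). One can alternatively avoid $x$ by verifying directly that
\[
(\varepsilon^j-1)^2 d_{j,m}^2 = \bigl(\varepsilon^{j(m+1)}-\varepsilon^{-jm}\bigr)^2.
\]
Dividing by $\varepsilon^j$ then gives
\[
(d_j-3)\,d_{j,m}^2 = \varepsilon^{j(2m+1)}-2+\varepsilon^{-j(2m+1)} = N.
\]
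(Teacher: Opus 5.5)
Your proof is correct and follows the same route as the paper: identify the eigenvalues of $L$ as $\varepsilon^{\pm j}$, deduce $\Tr\gamma=\varepsilon^{(2m+1)j}+\varepsilon^{-(2m+1)j}$, and factor using the closed form of $d_{j,m}$. Beyond what the paper writes, you justify the eigenvalues of $L$ (via $\Tr(SQ)=0$ and $\det(SQ)=-f^2\Delta_0/4$), which the paper asserts without proof, and you carry out explicitly the step the paper leaves as ``some algebra.''
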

\begin{proof}
The eigenvalues of $L$ are $\varepsilon^{\pm j}$. Thus,
$
    \Tr(\gamma) = \varepsilon^{(2m+1)j} + \varepsilon^{-(2m+1)j}.
$
The identity $d_{j,m} = \frac{\varepsilon^{j(m+1)}-\varepsilon^{-jm}}{\varepsilon^j-1}$ follows from \eqref{eq:rjm} and \eqref{eq:djm}. Using that identity, we calculate
\begin{align}
    (d_j-3)d_{j,m}^2
    &= \varepsilon^{(2m+1)j} - 2 + \varepsilon^{-(2m+1)j} \quad \mbox{(after some algebra)} \\
    &= \Tr(\gamma) - 2 = N. \qedhere
\end{align}
\end{proof}

\pagebreak

\begin{lemma}\label{lem:Lpower}
    $L^{2m+1}-I = d_{j,m} L^m(L-I)$.
\end{lemma}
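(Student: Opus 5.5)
We have to show $L^{2m+1}-I = d_{j,m}L^m(L-I)$ where $d_{j,m}=\sum_{k=-m}^m \varepsilon^{jk}$. The plan is to reduce the statement to a polynomial identity in $L$, using Cayley--Hamilton. Since $\det L = 1$ (as $L\in\SL_2(\Z)$ with eigenvalues $\varepsilon^{\pm j}$, as used in the proof of Lemma~\ref{lem:N}), the inverse satisfies $L^{-1} = \Tr(L) I - L$. Write
\[
D_m(L) = \sum_{k=-m}^{m} L^k ,
\]
a Laurent polynomial in $L$. The telescoping identity $(L-I)\sum_{k=-m}^m L^k = L^{m+1}-L^{-m}$ gives, after multiplying by $L^m$,
\[
L^{2m+1}-I = L^m(L-I)D_m(L).
\]
So it suffices to show that $D_m(L) = d_{j,m} I$ as matrices.

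To prove $D_m(L)=d_{j,m}I$, we pair terms: $D_m(L) = I + \sum_{k=1}^m (L^k+L^{-k})$. The key fact is that for any $M\in\SL_2$, Cayley--Hamilton gives $M+M^{-1} = \Tr(M)\,I$. Applying this with $M=L^k$ yields $L^k+L^{-k} = \Tr(L^k)I = (\varepsilon^{jk}+\varepsilon^{-jk})I$, because the eigenvalues of $L^k$ are $\varepsilon^{\pm jk}$. Summing over $k$,
\[
D_m(L) = \Bigl(1+\sum_{k=1}^m(\varepsilon^{jk}+\varepsilon^{-jk})\Bigr)I = \Bigl(\sum_{k=-m}^m \varepsilon^{jk}\Bigr)I = d_{j,m}I,
\]
where the last equality is \eqref{eq:djm}. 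This completes the argument.

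The only point needing care is justifying $\det L=1$ and that the eigenvalues are $\varepsilon^{\pm j}$. This follows from $\det L = \bigl(\tfrac{d_j-1}{2}\bigr)^2 + \tfrac{f_j^2}{f^2}\det Q$, with $\det Q = -\tfrac{f^2\Delta_0}{4}$. Substituting gives
\[
\det L = \tfrac{(\varepsilon^j+\varepsilon^{-j})^2 - (\varepsilon^j-\varepsilon^{-j})^2}{4} = 1,
\]
and $\Tr L = d_j-1 = \varepsilon^j+\varepsilon^{-j}$, since $SQ$ is traceless. Since this was already asserted in Lemma~\ref{lem:N}, I expect no real obstacle here: the lemma is the telescoping identity combined with $M+M^{-1}=\Tr(M)I$.
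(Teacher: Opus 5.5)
Your proposal is correct and follows essentially the same route as the paper: factor $L^{2m+1}-I=\bigl(\sum_{k=-m}^{m}L^k\bigr)L^m(L-I)$, then collapse the sum to $d_{j,m}I$ using $B+B^{-1}=\Tr(B)I$ and $\Tr(L^k)=\varepsilon^{jk}+\varepsilon^{-jk}$. Your explicit computation of $\det L=1$ and $\Tr L=\varepsilon^j+\varepsilon^{-j}$ supplies a check that the paper only asserts.
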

\begin{proof}
Using the fact that $B+B^{-1} = \Tr(B)I$ for any $B \in \SL_2(\Z)$, we have
\begin{align}
    L^{2m+1}-I
    &= \left(\sum_{k=-m}^m L^k\right)L^m(L-I) \\
    &= \left(1 + \sum_{k=1}^m \Tr(L^k)\right)L^m(L-I) \\
    &= \left(1 + \sum_{k=1}^m (\varepsilon^{jk} + \varepsilon^{-jk})\right)L^m(L-I) \\
    &= d_{j,m} L^m(L-I) \quad \text{by \eqref{eq:djm}}. \qedhere
\end{align}
\end{proof}

\begin{lemma}\label{lem:Ltom}
    For integers $j,m \geq 1$, $L^m = r_{j,m}L - r_{j,m-1}I$. (Here we set $r_{j,0}=0$.)
\end{lemma}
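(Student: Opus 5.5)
We prove the identity $L^m = r_{j,m}L - r_{j,m-1}I$ by induction on $m$, using only Cayley--Hamilton for $L$ and a three-term recurrence for the sequence $r_{j,m}$. Since $L\in\SL_2(\Z)$ has eigenvalues $\varepsilon^{\pm j}$ (as used in the proof of Lemma~\ref{lem:N}), Cayley--Hamilton gives
\begin{equation}
L^2 = \Tr(L)\,L - I, \qquad \Tr(L) = \varepsilon^j+\varepsilon^{-j}.
\end{equation}
Equivalently, $L^2 = (d_j-1)L - I$. This is the only matrix input needed.

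The scalar input is a recurrence for $r_{j,m}$, which we derive from the closed form \eqref{eq:rjm}. Write $s_m = \varepsilon^{jm}-\varepsilon^{-jm}$, so that $r_{j,m} = s_m/s_1$. The elementary identity
\begin{equation}
(\varepsilon^j+\varepsilon^{-j})\,s_m = s_{m+1} + s_{m-1}
\end{equation}
gives $r_{j,m+1} = (\varepsilon^j+\varepsilon^{-j})\,r_{j,m} - r_{j,m-1}$ for all $m\ge 1$. The initial values are $r_{j,0}=0$, consistent with the stated convention since $s_0=0$, and $r_{j,1}=1$.

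For the base case $m=1$, the right-hand side is $r_{j,1}L - r_{j,0}I = L$, as required. For the inductive step, assume $L^m = r_{j,m}L - r_{j,m-1}I$ and multiply on the left by $L$:
\begin{align}
L^{m+1} &= r_{j,m}L^2 - r_{j,m-1}L \\
&= r_{j,m}\bigl(\Tr(L)L - I\bigr) - r_{j,m-1}L \\
&= \bigl(\Tr(L)\,r_{j,m} - r_{j,m-1}\bigr)L - r_{j,m}I \\
&= r_{j,m+1}L - r_{j,m}I,
\end{align}
where the last line uses the scalar recurrence. This completes the induction.

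None of these steps is a real obstacle. The only point needing care is checking that the trace of $L$ equals $\varepsilon^j+\varepsilon^{-j}$, so that it matches the coefficient in the $r$-recurrence. This can also be read directly from the definition: $\Tr(L) = (d_j-1) + \frac{f_j}{f}\Tr(SQ)$, and $\Tr(SQ) = 0$ because $SQ = \smmattwo{-b/2}{-c}{a}{b/2}$.
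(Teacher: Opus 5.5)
Your proof is correct and is essentially the paper's argument: an induction on $m$ driven by the characteristic polynomial $L^2 - \Tr(L)L + I = 0$ together with the closed form \eqref{eq:rjm}, which you make explicit as the three-term recurrence $r_{j,m+1} = (\varepsilon^j+\varepsilon^{-j})\,r_{j,m} - r_{j,m-1}$. Your value $\Tr(L) = r_{j,2} = \varepsilon^j+\varepsilon^{-j}$ is the right one; the paper's parenthetical $\varepsilon+\varepsilon^{-1}$ is a slip and should read $\varepsilon^j+\varepsilon^{-j}$.
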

\begin{proof}
    Straightforward induction using the quadratic characteristic polynomial $L^2 - r_{j,2}L + I = 0$ (where $r_{j,2} = \varepsilon + \varepsilon^{-1} = \Tr(L)$) and the formula \eqref{eq:rjm}.
\end{proof}

\begin{lemma}\label{lem:detLminusI}
    $\det(L-I) = -(d_j-3)$.
\end{lemma}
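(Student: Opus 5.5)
The plan is to compute $\det(L-I)$ from the eigenvalues of $L$, exactly as in the proof of Lemma~\ref{lem:N}. The matrix $L$ has determinant $1$ and eigenvalues $\varepsilon^{j}$ and $\varepsilon^{-j}$. Hence $L-I$ has eigenvalues $\varepsilon^{j}-1$ and $\varepsilon^{-j}-1$, and
\begin{equation}
    \det(L-I) = (\varepsilon^j-1)(\varepsilon^{-j}-1) = 2 - \varepsilon^j - \varepsilon^{-j} = 2-\Tr(L).
\end{equation}
Since $d_j = \varepsilon^j+1+\varepsilon^{-j}$, we have $\Tr(L) = d_j-1$. Therefore $\det(L-I) = 2-(d_j-1) = -(d_j-3)$, as claimed.

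As a cross-check, I would verify the trace directly from the definition $L = \frac{d_j-1}{2}I + \frac{f_j}{f}SQ$. The product $SQ = \smmattwo{-b/2}{-c}{a}{b/2}$ is traceless, so $\Tr(L) = d_j-1$ with no eigenvalue information needed. Equivalently, any $2\times 2$ matrix $B$ with $\det B = 1$ satisfies $\det(B-I) = 1 - \Tr(B) + \det B = 2-\Tr(B)$. This gives the lemma purely from $\det L = 1$, which I take as already established by the paper's claim that $L\in\SL_2(\Z)$ with eigenvalues $\varepsilon^{\pm j}$.

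The only potential obstacle is justifying $\det L = 1$ if that is not taken as given. If needed, I would compute it directly. We have $\det L = \left(\frac{d_j-1}{2}\right)^2 + \frac{f_j^2}{f^2}\det(SQ)$, and $\det(SQ) = \det Q = ac - b^2/4 = -f^2\Delta_0/4$. So $\det L = \frac{(\varepsilon^j+\varepsilon^{-j})^2 - (\varepsilon^j-\varepsilon^{-j})^2}{4} = 1$, using $f_j^2\Delta_0 = (\varepsilon^j-\varepsilon^{-j})^2$.
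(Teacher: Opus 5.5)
Your proof is correct and matches the paper's: both compute $\det(L-I)$ as the product $(\varepsilon^j-1)(\varepsilon^{-j}-1) = 2-\varepsilon^j-\varepsilon^{-j} = 3-d_j$ of the eigenvalues of $L-I$. Your cross-check is also valid and makes the argument independent of the eigenvalue claim: it uses $\det(B-I) = 2-\Tr(B)$ for $\det B = 1$, together with $\Tr L = d_j-1$ and $\det L = 1$ computed directly from the definition of $L$.
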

\begin{proof}
    The eigenvalues of $L-I$ are $\varepsilon^j-1$ and $\varepsilon^{-j}-1$, and their product is $(\varepsilon^j-1)(\varepsilon^{-j}-1) = 2-\varepsilon^j-\varepsilon^{-j} = 3-d_j$.
\end{proof}

\section{Proof of the twisted convolution identity and ghost \texorpdfstring{$r$}{r}-SIC existence}\label{sec:tcc}

We now prove the main results, which are Theorem \ref{thm:tci} and Theorem \ref{thm:ghost}.

\begin{proof}[Proof of Theorem \ref{thm:tci}]
Set $H = d_{j,m}\Z_{\row}^2/\Z_{\row}^2(\gamma-I)$, so that we obtain $H^\vee = \Z_{\row}^2(L-I)/\Z_{\row}^2(\gamma-I)$. 
Here $\abs{H} = d_j-3$ and $\abs{H^\vee} = d_{j,m}^2$.

Choose $\ul v \in H^\vee \setminus \{0\}$. Then there exists some $\ul y \in G \setminus H$ such that $\ul v = -\ul{y}(L-I) = \ul{y}-\ul{y}L$. Theorem \ref{thm:subgrouppentagon} (specifically \eqref{eq:subgrouppentagon2}) gives
\begin{equation}\label{eq:tccproof1}
    \frac{1}{\sqrt{d_j-3}}\sum_{\ul x \in H} \frac{F_\gamma^+(\ul{x}-\ul{y})}{F_\gamma^-(\ul{x}-\ul{y}L)}
    = \frac{1}{d_{j,m}F_\gamma^-(\ul v)} \sum_{\ul{u} \in H^\vee} \langle \ul{y}; \ul{u}\rangle_\gamma \frac{F^+(\ul{u}+\ul{v})}{F^-(\ul{u})}.
\end{equation}

By \cite[Proposition 8]{RW} (or as a consequence of \cite[Theorem 4.37]{Kopp}), we have the order-$(2m+1)$ symmetry $F_\gamma^{\pm}(\ul{z}L) = F_\gamma^{\pm}(\ul{z})$ for all $\ul{z} \in G$. (This is a generalization of the order-$3$ Zauner symmetry for $1$-SICs.) Note also that $\ul{z}L = \ul{z}$ for all $\ul{z} \in H$. Thus, for $\ul{x} \in H$, $\ul{x}-\ul{y} \neq \ul{0}$ (because $\ul y \nin H$), and
\begin{equation}
    \frac{F_\gamma^+(\ul x - \ul y)}{F_\gamma^-(\ul x - \ul y L)}
    = \frac{F_\gamma^-(\ul x - \ul y)}{F_\gamma^-(\ul x - \ul y L)}
    = \frac{F_\gamma^-((\ul x - \ul y)L)}{F_\gamma^-(\ul x - \ul y L)}
    = \frac{F_\gamma^-(\ul x L - \ul y L)}{F_\gamma^-(\ul x - \ul y L)}
    = \frac{F_\gamma^-(\ul x - \ul y L)}{F_\gamma^-(\ul x - \ul y L)}
    = 1.
\end{equation}
Therefore, the left-hand side of \eqref{eq:tccproof1} simplifies to $\sqrt{d_j-3}$, and thus we may rewrite \eqref{eq:tccproof1} as
\begin{equation}
    -d_{j,m}\sqrt{d_j-3}F_\gamma^-(\ul v)\,
    + \sum_{\ul u \in H^\vee} \langle \ul y; \ul u\rangle_\gamma \frac{F^+(\ul u + \ul v)}{F^-(\ul u)} = 0.
\end{equation}
Taking the term where $\ul u = \ul 0$ out of the sum, and noting that $F_\gamma^-(0) = \varepsilon^{-(2m+1)j/2}$ and $F_\gamma^-(\ul u) = F_\gamma^+(\ul u)$ for $\ul u \neq \ul 0$ (including $\ul u = \ul v$), we have
\begin{equation}
    \left(\varepsilon^{(2m+1)j/2}-d_{j,m}\sqrt{d_j-3}\right)F_\gamma^+(\ul v) \,
    + \sum_{\ul u \in H^\vee \setminus \{\ul 0\}} \langle \ul y; \ul u\rangle_\gamma \frac{F^+(\ul u + \ul v)}{F^+(\ul u)} = 0.
\end{equation}
But $\varepsilon^{(2m+1)j/2}-d_{j,m}\sqrt{d_j-3} = \varepsilon^{-(2m+1)j/2} = \frac{1}{F_\gamma^+(\ul v)}$, and therefore
\begin{equation}\label{eq:tccproof2}
    \sum_{\ul u \in H^\vee} \langle \ul y; \ul u\rangle_\gamma \frac{F_\gamma^+(\ul u + \ul v)}{F_\gamma^+(\ul u)} = 0.
\end{equation}

Now write $\ul u^\top = \mathbf{u} + (\gamma-I)\Z^2$, $\ul v^\top = \mathbf{v} + (\gamma-I)\Z^2$, and $\ul v^\top = \mathbf{v} + (\gamma-I)\Z^2$, where $\mbf{u} = \smcoltwo{u_1}{u_2}, \mbf{v} = \smcoltwo{v_1}{v_2}, \mbf{y} = \smcoltwo{y_1}{y_2}$ are column vectors in $\Z_{\col}^2$.
By Proposition \ref{prop:Ftoshin}, 
if $u_1, u_1+v_1 \nin \Z^+$, then
\begin{align}\label{eq:Ffracshinfrac1}
    \frac{F_\gamma^+(\ul{u}+\ul{v})}{F_\gamma^+(\ul{u})} 
    &= \frac{\sfc{\frac{1}{N}(I-\gamma^{-1})S\mathbf{u}}{\gamma}{\tau}}{\sfc{\frac{1}{N}(I-\gamma^{-1})S(\mathbf{u}+\mathbf{v})}{\gamma}{\tau}}.
\end{align}
We have the $2 \times 2$ matrix identities
\begin{align}
    \frac{1}{N}(I-\gamma^{-1}) S (L-I)^\top 
    &= \frac{1}{N}L^{-2m-1}(L^{2m+1}-I) S (L-I)^\top \\
    &= \frac{1}{d_{j,m}^2(d_j-3)}L^{-2m-1}\left(d_{j,m} L^m(L-I)\right) S (L-I)^\top \quad \text{by Lemmas \ref{lem:N} and \ref{lem:Lpower}} \\
    &= \frac{1}{d_{j,m}(d_j-3)}L^{-m-1} \left((L-I) S (L-I)^\top\right) \\
    &= \frac{1}{d_{j,m}(d_j-3)}L^{-m-1} \left(\det(L-I) S\right) \\
    &= -\frac{1}{d_{j,m}}L^{-m-1}S \quad \text{ by Lemma \ref{lem:detLminusI}}.
\end{align}
We have $\mbf{v} = -(L-I)^\top\mbf{y}$. If we also express $\mbf{u} = (L-I)^\top\mbf{x}$ for some $\mbf{x} \in \Z_{\col}^2$, then \eqref{eq:Ffracshinfrac1} becomes
\begin{align}\label{eq:Ffracshinfrac2}
    \frac{F_\gamma^+(\ul{u}+\ul{v})}{F_\gamma^+(\ul{u})} 
    &= \frac{\sfc{\frac{1}{N}(I-\gamma^{-1})S(L-I)^\top\mathbf{x}}{\gamma}{\tau}}{\sfc{\frac{1}{N}(I-\gamma^{-1})S(L-I)^\top(\mathbf{x}-\mathbf{y})}{\gamma}{\tau}}
    = \frac{\sfc{-d_{j,m}^{-1}L^{-m-1}S\mathbf{x}}{\gamma}{\tau}}{\sfc{-d_{j,m}^{-1}L^{-m-1}S(\mathbf{x}-\mathbf{y})}{\gamma}{\tau}}.
\end{align}
Additionally, the bicharacter becomes
\begin{align}
    \langle \ul y; \ul u \rangle_\gamma
    &= \ee{-\tfrac{1}{N} \mbf{y}^\top (\gamma-I)S \mbf{u}} \\
    &= \ee{-\mbf{y}^\top \gamma \left(\tfrac{1}{N} (I-\gamma^{-1})S (L-I)^\top\right)\mbf{x}} \\
    &= \ee{-\mbf{y}^\top L^{2m+1} \left(-\tfrac{1}{d_{j,m}} L^{-m-1}S\right)\mbf{x}} \\
    &= \ee{\tfrac{1}{d_{j,m}}\mbf{y}^\top L^{m}S\mbf{x}}.
\end{align}
Now express $\mbf{x} = SL^{m+1}\mbf{q}$ and $\mbf{y} = SL^{m+1}\mbf{p}$. We obtain
\begin{align}\label{eq:Ffracshinfrac3}
    \frac{F_\gamma^+(\ul{u}+\ul{v})}{F_\gamma^+(\ul{u})} 
    &= \frac{\sfc{d^{-1}\mathbf{q}}{\gamma}{\tau}}{\sfc{d^{-1}(\mathbf{q}-\mathbf{p})}{\gamma}{\tau}}
    = \sfc{d^{-1}\mathbf{q}}{\gamma}{\tau} \sfc{d^{-1}(\mathbf{q}-\mathbf{p})}{\gamma^{-1}}{\tau}
\end{align}
(using the cocycle condition in the last step), and
\begin{align}\label{eq:bicharacterfinal}
    \langle \ul y; \ul u \rangle_\gamma
    &= \ee{-\tfrac{1}{d_{j,m}}\mbf{y}^\top (L^{m+1})^\top S^\top L^{2m+1}\mbf{x}}
    = \ee{\tfrac{1}{d_{j,m}}\mbf{y}^\top S L^{m}\mbf{x}}
    = \zeta_{d_{j,m}}^{\langle \mbf{p}, L^m\mbf{q} \rangle}.
    = \zeta_{d_{j,m}}^{\langle \mbf{p}, (r_{j,m}L - r_{j,m-1}I)\mbf{q} \rangle}
\end{align}
(using Lemma \ref{lem:Ltom} in the last step).

Let $\mathcal{I}$ be a set of coset representatives of $\Z_\col^2/d_{j,m}\Z_\col^2$ containing $\mbf{0}$ and $\mbf{p}$. Substituting \eqref{eq:Ffracshinfrac3} and \eqref{eq:bicharacterfinal} into \eqref{eq:tccproof2}, and taking $d=d_{j,m}$, $r=r_{j,m}$, and $\lambda$ an integer satisfying \mbox{$\lambda r \equiv -r_{j,m-1} \Mod{d}$}, we obtain
\begin{equation}
    \sum_{\mbf{q} \in \mathcal{I}} \zeta_{d}^{r\langle \mbf{p}, (\lambda I + L)\mbf{q} \rangle} \sfc{d^{-1}\mathbf{q}}{\gamma}{\tau} \sfc{d^{-1}(\mathbf{q}-\mathbf{p})}{\gamma^{-1}}{\tau} = 0,
\end{equation}
which is the twisted convolution identity conjectured in \cite{AFK} (for the choice of shift $\lambda$).
\end{proof}

\begin{proof}[Proof of Theorem \ref{thm:ghost}]
    Follows from Theorem \ref{thm:tci} and \cite[Theorem 1.45]{AFK}.
\end{proof}



\end{document}